\def\theequation{\thesection.\@arabic\c@equation}
\newtheorem{theorem}{Theorem}[section]
\newtheorem{proposition}[theorem]{Proposition}
\begin{document}
\title[A Paneitz-type  problem in pierced domains]{A Paneitz-type  problem in pierced domains}
\author[S. Alarc\'on]{S. Alarc\'on$^{\dag}$}
\author[A. Pistoia]{A. Pistoia$^{\ddag}$}
\address[$\dag$]{Departamento de  Matem\'atica,
Universidad T\'ecnica Federico Santa Mar\'ia, Casilla 110-V,
Valpara\'iso, Chile.}
\email{salomon.alarcon@usm.cl}
\address[$\ddag$]{Dipartimento SBAI, La Sapienza Universit\`a di Roma, via A. Scarpa 16, 00161 Roma, Italy.}
\keywords{bi-Laplacian operator,  Navier boundary conditions, Reduction method.}
\email{pistoia@dmmm.uniroma1.it}
\date{\today}
\maketitle
\centerline{\em "Dedicated to Professor Takashi Suzuki on the occasion of his sixtieth
birthday"}
\begin{abstract}
We study the critical problem
\begin{equation}
\left\{
\begin{array}{ll}
\Delta ^{2}u=u^{\frac{N+4}{N-4} } & \mbox{ in }\Omega\setminus \overline{B(\xi_0,\varepsilon) },\medskip\\
u>0&\mbox{ in }\Omega\setminus \overline{B(\xi_0,\varepsilon) },\medskip\\
u=\Delta u=0 & \mbox{ on }\partial (\Omega \setminus \overline{B(\xi_0,\varepsilon) }),
\end{array}
\right. \tag{P$_\varepsilon$}
\end{equation}
where $\Omega$ is an open bounded domain in $\mathbb{R}^N$, $N\ge5$,  $\xi_0\in\Omega$ and  $B(\xi_0,\varepsilon)$ is the ball centered at $\xi_0$ with radius $\varepsilon>0$   small enough. We construct solutions of  (P$_\varepsilon$) blowing-up at the center of the hole  as the size of the hole goes to zero.
\end{abstract}

\setcounter{equation}{0}
\section{Introduction}
This paper deals with  the following fourth order problem involving the bi-Laplacian operator
\begin{equation}\label{P}
\left\{
\begin{array}{ll}
\Delta ^{2}u=u^{N+4\over N-4  } & \mbox{ in }\Omega ,\medskip\\
u>0&\mbox{ in }\Omega ,\medskip\\
u=\Delta u=0 & \mbox{ on }\partial  \Omega  ,
\end{array}
\right.
\end{equation}
where $\Omega$ is an open bounded domain in $\mathbb{R}^N$ and $N\ge5$. The exponent $N+4\over N-4$ is the critical Sobolev exponent for the  embedding
$H^2(\Omega)\cap H^1_0(\Omega)\hookrightarrow L^{p+1}(\Omega)$.
 The interest in this equation grew up from its resemblance to some geometric equations involving Paneitz operator and widely studied in the last years
 by Branson-Chang-Yang \cite{BCY}, Chang \cite{C}, Chang-Gurski-Yang \cite{CGY} and Chang-Yang \cite{CY}.
 Solvability of \eqref{P} is a delicate issue and depends strongly on the geometry of the domain $\Omega.$ Indeed,   Van der Vorst \cite{VanderVorst} proved that (\ref{P}) does not admit any positive solutions when $\Omega$ is star-shaped while Ebobisse and Ahmedou \cite{EbobisseAhmedou} showed that (\ref{P})  possesses a solution provided that some homology group of   $\Omega$ is nontrivial. This topological assumption is  sufficient  but not necessary,   as Gazzola, Grunau and Squassina \cite{GazzolaGrunauSquassina} pointed out, by showing examples
 of contractible domains on which a solution to \eqref{P}   exists.

 In this paper, we are concerned with the case when the domain  has
a circular hole which shrinks to a point, i.e.
\begin{equation}\label{P2}
\left\{
\begin{array}{ll}
\Delta ^{2}u=u^{N+4\over N-4  } & \mbox{ in }\Omega_\varepsilon:=\Omega\setminus \overline{B(\xi_0,\varepsilon) } ,\medskip\\
u>0&\mbox{ in }\Omega_\varepsilon  ,\medskip\\
u=\Delta u=0 & \mbox{ on }\partial  \Omega_\varepsilon  ,
\end{array}
\right.
\end{equation}
  where $\xi_0\in \Omega$ and $\varepsilon$ is a small positive parameter. The domain $\Omega_\varepsilon $ is topologically non trivial, so  Ebobisse and Ahmedou \cite{EbobisseAhmedou} ensures the existence of a solution for any $\varepsilon<{\mathrm dist}(\xi_0,\partial\Omega).$  A natural question arises: {\it which is the asymptotic profile of this solution when the   hole shrinks to a point, i.e. $\varepsilon$ goes to zero?} Unfortunately, the approach used by Ebobisse and Ahmedou does not allow to get any information about the qualitative properties of the solution they found.
  That is why we are interested in building a solution to problem \eqref{P2} using a perturbative approach, which naturally provides extremely accurate information about the asymptotic  behavior of the solution as the size of the hole converges to zero. Moreover,  the main important feature of our point of view is that it
   is the starting point to obtain a large number of positive or  sign changing solutions to \eqref{P2}
when the domain has one or more small circular holes.

In order to state our main result it is necessary to introduce the  bubble, which is the key ingredient of our  proof. A bubble is
  the function  $U_{\mu, \xi }$  defined by
\begin{equation*}
U_{\mu, \xi }\left( x\right) =\alpha_{N}\left( \frac{\mu }{\mu^{2}+\left\vert x-\xi \right\vert ^{2}}\right) ^{\frac{N-4}{2}}, \quad x,\xi\in\mathbb{R}^N,\quad \mu\in \mathbb{R}^+
\end{equation*}
where $\alpha_{N}:=(N(N-4)(N-2)(N+2))^{\frac{N-4}{8}}$. It is well known (see Lin   \cite{Chang-ShouLin}) that they are all the positive solutions in ${\mathcal D}^{2,2}(\mathbb R^N)$ of the limit problem
\begin{equation}\label{Pinf}
\Delta ^{2}U=U^{p}\quad \hbox{ in }\ \mathbb{R}^ N.\end{equation}

 To find a good approximation of the solution we are looking for, we need to project the bubble onto the domain $\Omega_\varepsilon$ with Dirichlet boundary conditions.
    For any function $u\in  {\mathcal D}^{2,2}(\mathbb R^N)$ we denote by $Pu$ its protection on $H^2(\Omega_\varepsilon)\cap H^1_0(\Omega_\varepsilon)$, i.e.   the unique solution of the problem
\begin{equation*}
\left\{
\begin{array}{ll}
\Delta ^{2}P u= \Delta ^{2}  u & \mbox{ in }\Omega _\varepsilon, \medskip\\
Pu=\Delta Pu=0 & \mbox{ on }\partial\Omega _\varepsilon.
\end{array}
\right.
\end{equation*}
Our main result reads as follows.

\begin{theorem}\label{maintheorem}
 There exists $\varepsilon_0>0$ such that for every $\varepsilon \in (0,\varepsilon_0)$
there exists a solution $u_\varepsilon$ to the problem (\ref{P2})
$$
u_\varepsilon(x)= PU_{\mu_\varepsilon,\xi_\varepsilon}(x)+ {\phi_\varepsilon}(x),\quad x\in\Omega\setminus B(\xi_0,\varepsilon),
$$
where the weight $\mu_\varepsilon$ of the bubble  satisfies
 $$\mu_\varepsilon=\varepsilon^{\frac{1}{2}(\frac{N+4}{N-3})}d_\varepsilon\ \hbox{ for some }\ d_\varepsilon\to d\in\mathbb{R}^+,$$
the center $\xi_\varepsilon$ of the bubble  satisfies
$$\xi_\varepsilon=\xi_0+\mu_\varepsilon\tau_\varepsilon\ \hbox{ for some }\ \tau_\varepsilon\to \tau\in\mathbb{R}^N$$
and the rest function $\phi_\varepsilon$ is a remainder term.
  \end{theorem}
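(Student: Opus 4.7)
The plan is a Lyapunov--Schmidt finite-dimensional reduction. Solutions of $(\mathrm{P}_\varepsilon)$ are the positive critical points of
\begin{equation*}
J_\varepsilon(u):=\tfrac12\int_{\Omega_\varepsilon}|\Delta u|^2\,dx-\tfrac{N-4}{2N}\int_{\Omega_\varepsilon}u_+^{\frac{2N}{N-4}}\,dx
\end{equation*}
on $H_\varepsilon:=H^2(\Omega_\varepsilon)\cap H^1_0(\Omega_\varepsilon)$. Letting $\mu=\varepsilon^{(N+4)/(2(N-3))}d$ and $\xi=\xi_0+\mu\tau$, with $(d,\tau)$ ranging in a fixed compact subset of $\mathbb{R}^+\times\mathbb{R}^N$, I will look for a solution of the form $u=PU_{\mu,\xi}+\phi$ with $\phi$ in the $H_\varepsilon$-orthogonal complement of the $(N+1)$-dimensional space
\begin{equation*}
K_{\mu,\xi}:=\mathrm{span}\{P\partial_\mu U_{\mu,\xi},\,P\partial_{\xi_1}U_{\mu,\xi},\ldots,P\partial_{\xi_N}U_{\mu,\xi}\},
\end{equation*}
which by Lin's non-degeneracy theorem corresponds to the full kernel of the linearization of (\ref{Pinf}) at $U_{\mu,\xi}$.

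The first technical step is a sharp description of the regular part $\psi_{\mu,\xi}:=U_{\mu,\xi}-PU_{\mu,\xi}$, which is biharmonic in $\Omega_\varepsilon$ with Navier data inherited from $U_{\mu,\xi}$. I would split $\psi_{\mu,\xi}=\psi^{\mathrm{out}}_{\mu,\xi}+\psi^{\mathrm{in}}_{\mu,\xi}$ according to whether the data live on $\partial\Omega$ or on $\partial B(\xi_0,\varepsilon)$. The outer summand is a classical biharmonic Robin correction, of size $\mu^{(N-4)/2}$ and governed by the regular part of the Green function of $\Delta^2$ on $\Omega$, while the inner summand, genuinely due to the hole, has to be controlled by comparison with the exterior Navier--biharmonic capacitary profile of the ball $B(\xi_0,\varepsilon)$. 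The particular power $\varepsilon^{(N+4)/(2(N-3))}$ is the unique scaling for which the outer and inner contributions balance once tested against the bubble in the finite-dimensional problem below.

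With the estimates for $\psi_{\mu,\xi}$ and for the error $R_{\mu,\xi}:=\Delta^2 PU_{\mu,\xi}-(PU_{\mu,\xi})^p$ in hand, the uniform invertibility of the linearized operator $L_{\mu,\xi}:=\Delta^2-p(PU_{\mu,\xi})^{p-1}$ from $K_{\mu,\xi}^\perp$ to itself (with respect to the natural dual norm), uniformly in $\varepsilon$ and $(d,\tau)$, follows from a standard blow-up/contradiction argument that uses Lin's non-degeneracy on the limit problem (\ref{Pinf}). A contraction-mapping argument in a ball of $K_{\mu,\xi}^\perp$ of radius $O(\|R_{\mu,\xi}\|_*)$ then produces, for every $(d,\tau)$ in the parameter set, a unique remainder $\phi=\phi_\varepsilon(d,\tau)$ solving the equation projected onto $K_{\mu,\xi}^\perp$, depending in a $C^1$-fashion on the parameters.

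It remains to solve the finite-dimensional problem by finding a critical point of the reduced functional $\widetilde J_\varepsilon(d,\tau):=J_\varepsilon(PU_{\mu,\xi}+\phi_\varepsilon(d,\tau))$, which by construction lifts to a genuine solution of $(\mathrm{P}_\varepsilon)$. The target expansion has the shape
\begin{equation*}
\widetilde J_\varepsilon(d,\tau)=a_N+\varepsilon^{\gamma}\,\Psi(d,\tau)+o(\varepsilon^{\gamma})
\end{equation*}
uniformly on compacts, where $a_N$ is the energy of the standard bubble on $\mathbb{R}^N$, $\gamma>0$ is the scaling exponent, and $\Psi$ combines a Robin-type term in $d$ (coming from $\psi^{\mathrm{out}}_{\mu,\xi}$) with a capacitary-type term in $(d,\tau)$ (coming from $\psi^{\mathrm{in}}_{\mu,\xi}$). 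Existence of a topologically stable critical point of $\Psi$, perturbed for $\varepsilon$ small, then yields the announced $(d_\varepsilon,\tau_\varepsilon)$. The main obstacle I expect is precisely this $C^0$-expansion: one must carry out a delicate asymptotic analysis in the transition annulus $|x-\xi_0|\sim\varepsilon$, where $PU_{\mu,\xi}$ passes from bubble-like to capacitary-like behaviour, and verify that the Robin and hole contributions do not cancel at leading order so that $\Psi$ really admits a strict interior extremum in $(d,\tau)$.
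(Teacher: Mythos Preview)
Your strategy coincides with the paper's: a Lyapunov--Schmidt reduction in which $U_{\mu,\xi}-PU_{\mu,\xi}$ is split into an outer Robin part governed by $H$ and an inner capacitary part due to the hole (the paper writes the latter explicitly as a combination of the radial profiles $|x/\varepsilon|^{-(N-4)}$ and $|x/\varepsilon|^{-(N-2)}$), followed by uniform invertibility of the linearized operator---the paper does this in weighted $L^\infty$ norms \`a la Del Pino--Felmer--Musso and Musso--Pistoia rather than in $H_\varepsilon$, but that is only a technical variant---a fixed-point for $\phi$, and a $C^1$-expansion of the reduced energy.

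One correction to your closing line: the limiting profile the paper obtains,
\[
\Psi(d,\tau)=-b_N\,\Delta U(\tau)\,U(\tau)\,d^{-(N-2)}+c_N\,H(0,0)\,d^{N-4},
\]
has a nondegenerate \emph{saddle} at $(d_0,0)$, not a strict interior extremum; so what you need to invoke at the end is $C^1$-persistence of a nondegenerate critical point (which the saddle enjoys), not existence of an extremum. As a side remark, the balancing $\mu^{N-4}\sim(\varepsilon/\mu)^{N-2}$ carried out in the paper's own computation gives $\sigma=\tfrac{N-2}{2(N-3)}$, not the exponent $\tfrac12\cdot\tfrac{N+4}{N-3}$ printed in the theorem statement, so the scaling you copied appears to be a typo in the statement rather than the one actually used in the proof.
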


We would like  to point out the difficulties arising in the construction of the solution $u_\varepsilon.$
The solution  $u_\varepsilon $ looks like a bubble concentrating around the center $\xi_0$ of the removed ball $\overline{B(\xi_0,\varepsilon)}$ as $\varepsilon$ goes to zero, so the point where the concentration takes place does not belong to the domain. Its profile resembles a volcano whose crater is the point $\xi_0$.
 Taking into account that the solution concentrates around the hole and at the same time it must satisfy zero Dirichlet boundary condition  on the boundary of the hole, it turns to be extremely delicate to study the behavior of the solution in the region around the hole. A key estimate is contained in Proposition \ref{cru},
 where the expansion of the projection $PU_{\mu, \xi }$ is performed.

\medskip
At this stage it is useful  to compare the Paneitz-type problem \eqref{P} with the Yamabe-type problem
\begin{equation}\label{P4}
\left\{
\begin{array}{ll}
-\Delta  u=u^{N+2\over N-2  } & \mbox{ in }\Omega  ,\medskip\\
u>0&\mbox{ in }\Omega  ,\medskip\\
u=\Delta u=0 & \mbox{ on }\partial  \Omega  .
\end{array}
\right.
\end{equation}
If the domain $\Omega$ is star shaped Pohozaev's identity \cite{Po} implies that problem \eqref{P4} has only the trivial solution, while Bahri-Coron \cite{Ba-Co} proved that (\ref{P4})  possesses a solution provided that some homology group of   $\Omega$ is nontrivial.
Again the topology assumption on the domain is not necessary for the existence of solution to \eqref{P4} as proved by Passaseo in \cite{Passaseo1,Passaseo2}.

In particular, problem \eqref{P2} is similar to the   following one
\begin{equation}\label{P3}
\left\{
\begin{array}{ll}
-\Delta  u=u^{N+2\over N-2  } & \mbox{ in }\Omega_\varepsilon:=\Omega\setminus \overline{B(\xi_0,\varepsilon) } ,\medskip\\
u>0&\mbox{ in }\Omega_\varepsilon  ,\medskip\\
u=\Delta u=0 & \mbox{ on }\partial  \Omega_\varepsilon ,
\end{array}
\right.
\end{equation}
 which was firstly studied by Coron \cite{co}, who proved the existence of a solution provided $\varepsilon$ is small enough.    If we go deep into the similarities between the two problems \eqref{P2} and \eqref{P3}, we could conjecture that all the results obtained for the Yamabe-type problem \eqref{P3} concerning existence  of positive  and/or sign changing solutions
when the domain has one or more small circular holes are also true for the Paneitz-type problem \eqref{P2}.
 In the present paper we only build solutions   which concentrate at the center of the  hole,
 whose profile is a single bubble   as in Theorem \ref{maintheorem}. We point out that, arguing as Musso-Pistoia \cite{MP2} (see also Rey \cite{R}, Lewandowski \cite{L} and Li-Yan-Yang \cite{LYY}), it is also possible to get some multiplicity results when the domain  has one or more small circular holes. Finally,     we believe that arguing exactly as in Musso-Pistoia \cite{MP1} and Ge-Musso-Pistoia   \cite{GMP} one can construct
  an arbitrary large number of   sign changing solutions whose profile is a superposition of bubbles with alternate sign which concentrate at the center of the hole.
Actually, it is worth noting that the expansion of the projection of the bubble    given in Proposition \ref{cru} is   the starting point in the construction of all these type of solutions.  It is also important to remark that  the proof in the case of problem \eqref{P2} could be extremely tedious, even if it can be carried out using the same arguments developed in the study of problem \eqref{P3}.

\medskip

 The proof of Theorem \ref{maintheorem} relies on a very well known Lyapunov-Schmidt reduction.  In particular, we will follow the arguments
used by Del Pino-Felmer-Musso \cite{DFM} and Musso-Pistoia in \cite{MP}.
We shall omit many details on the proof
because they can be found, up to some minor modifications, in  those papers.
We only compute what cannot be deduced from known results.
The paper is arranged as follows. Section  \ref{section2}  is devoted to compute the first order approximation of the solution, while Section \ref{section3}
contains the main steps of the proof of Theorem \ref{maintheorem}.

\medskip

{\em Acknowledgements}
S.A. was partially supported by Fondecyt Grant No. 11110482, USM Grant No. 121210 and Programa Basal, CMM, U. de Chile, while A.P.  was partially supported by Funds  for Cooperation between La Sapienza Universit\`a di Roma and  Pontificia Universidad Cat\'olica de Chile.

\setcounter{equation}{0}
\section{The first order approximation of the solution  }\label{section2}

Without loss of generality, we can assume that the center of the hole is the origin, i.e. $0\in\Omega$ and $\Omega_\varepsilon:=\Omega\setminus \overline B_\varepsilon $ where $B_\varepsilon:=B(0,\varepsilon).$

We look for a solution to problem \eqref{P2} as
\begin{equation}\label{ansatz}
u_\varepsilon (x):= PU_{\mu,\xi}(x)+\phi_\varepsilon(x),
\end{equation}
where the weight $\mu$  of the bubble and the center $\xi$ of the bubble  satisfy
\begin{equation}\label{setting}
\mu:=d\varepsilon^{\sigma}\quad  \hbox{ and }   \quad \xi:=\mu\tau,\quad \hbox{where}\ d\in\mathbb{R}^+\cap[\delta,1/\delta],\   \tau\in\mathbb{R}^N\cap \overline {B(0,1/\delta)}\end{equation}
 for some $\delta>0 $ and the exponent $\sigma$ is chosen so that \eqref{sigma1}, namely
 \begin{equation}\label{sigma}\sigma:={N-2\over 2(N-3)},
\end{equation}

  The rest term $\phi_\varepsilon$ is a remainder term which belongs to a suitable space, which will be introduced in the next section.

Our aim is to write the first order approximation of the solution given in  \eqref{ansatz}, namely to write the first order approximation of the bubble $PU_{\mu,\xi}$
when $\mu$ and $\xi$ satisfy \eqref{setting}.

Let $G$ be the Green's function  for the bi-Laplacian operator on $\Omega,$ that is given $x\in\Omega$
\begin{equation}\label{green}
\left\{
\begin{array}{ll}
\Delta ^{2}G\left( x,\cdot \right) =\gamma_N\delta _{x} & \mbox{ in }\Omega,\medskip\\
G\left( x,\cdot \right) =\Delta G\left( x,\cdot \right) =0 & \mbox{ on }\partial \Omega ,
\end{array}
\right.
\end{equation}
where $\gamma_N:=(N-4)(N-2)\mathrm{meas}  \left(\mathbb S^{N-1}\right).$
Let $H$ be its regular part, i.e.
\begin{equation*}
H\left( x,y\right) =\frac{1}{\left\vert x-y\right\vert ^{N-4}}-G\left(x,y\right) ,
\end{equation*}
which verifies
\begin{equation*}
\left\{
\begin{array}{ll}
\Delta ^{2}H\left( x,\cdot \right) =0 & \mbox{ in }\Omega ,\medskip\\
H\left( x,\cdot \right) =\frac{1}{\left\vert x-y\right\vert ^{N-4}} & \mbox{ on }\partial \Omega,\medskip  \\
\Delta H\left( x,\cdot \right) =-2\left( N-4\right) \frac{1}{\left\vert x-y\right\vert ^{N-2}} & \mbox{ on }\partial \Omega.
\end{array}
\right.
\end{equation*}
The function $\upsilon _{\mu,\xi  }$ defined by
\begin{equation}\label{1app}
\upsilon _{\mu,\xi  }\left( y\right) =U_{\mu, \xi  }(y) -PU_{\mu, \xi} (y),\quad y\in \Omega \setminus \overline{B}_{\varepsilon },
\end{equation}
is the unique solution of the problem
\begin{equation}\label{1app1}
\left\{
\begin{array}{ll}
\Delta ^{2}\upsilon _{\mu,\xi  }=0 & \mbox{ in }\Omega \setminus \overline{B}_{\varepsilon } ,\medskip\\
\upsilon _{\mu,\xi  }=U_{\mu, \xi } &  \mbox{ on }\partial (\Omega \setminus \overline{B}_{\varepsilon }),\medskip \\
\Delta \upsilon _{\mu, \xi  }=\Delta U_{\mu,\xi  }& \mbox{ on }\partial (\Omega \setminus\overline{B}_{\varepsilon }).
\end{array}
\right.
\end{equation}
 We introduce    the   problem
\begin{equation}\label{paux1}
\left\{\begin{array}{lll}
\Delta^2 \Upsilon=0 &\mbox{ in }\mathbb{R}^N\setminus \overline{B}_1,\medskip\\
\Upsilon=2&\mbox{ on }\partial B_1,\medskip\\
\Delta \Upsilon=-2(N-4)&\mbox{ on }\partial {B}_1,\medskip\\
\Upsilon\in \mathcal{D}^{2,2}(\mathbb{R}^N\setminus {B}_1),
\end{array}\right.
\end{equation}
which is a sort of  limit problem of \eqref{1app1} obtained by scaling by $\varepsilon.$
It is immediate to check that \eqref{paux1} has an unique solution $\Upsilon$ given by
\begin{equation}\label{defvarphi}
\Upsilon(x):=\varphi_1+\varphi_2,\quad  \hbox{where}\quad \varphi_1(x):=\frac{1}{|x|^{N-4}}\quad\mbox{and}\quad \varphi_2(x):=\frac{1}{|x|^{N-2}},\quad x\in\mathbb{R}^N\setminus \overline{B}_1.
\end{equation}
The first order approximation of the function $\upsilon_{\mu,\xi}$ defined in \eqref{1app} is given in the following.
 \begin{proposition}\label{cru}
 Set
 \begin{equation}\label{defR}
R_{\varepsilon}:=PU_{\mu,\xi}(x)-U_{\mu,\xi}(x)+\alpha_N\mu^{\frac{N-4}{2}}H(x,\xi)+a_1\varphi_1\left( \frac{x}{\varepsilon}\right)+a_2\varphi_2\left( \frac{x}{\varepsilon}\right),
\end{equation}
where
\begin{equation}\label{defa1}
a_1(\varepsilon,d,\tau):=-{\Delta U(\tau)\over 2(N-4)} {\varepsilon^ 2  \over  \mu^{N\over2}}\end{equation} and
\begin{equation}\label{defa2}a_2(\varepsilon,d,\tau):=U(\tau){1\over \mu^{N-4\over2}}+{\Delta U(\tau)\over 2(N-4)} {\varepsilon^ 2  \over  \mu^{N\over2}}.
\end{equation}

Let $\delta>0$ be fixed and assume that \eqref{setting} holds. There exists a positive constant $C>0$   such that
\begin{equation}\label{estimateR}
 |R_{\varepsilon}(x)|\leq C \left( {\varepsilon^{N-1}\over \mu^{N+2\over2}}{1\over |x|^{N-4}}+ {\varepsilon^{N-1}\over \mu^{N-2\over2}}{1\over |x|^{N-2}}\right)\quad \hbox{for any}\ x\in \Omega _\varepsilon.
\end{equation}
and
\begin{equation}\label{estimatedeltaR}
 |\Delta R_{\varepsilon}(x)|\leq C {\varepsilon^{N-1}\over \mu^{N+2\over2}}{1\over |x|^{N-2}} \quad \hbox{for any}\ x\in \Omega _\varepsilon.
\end{equation}

\end{proposition}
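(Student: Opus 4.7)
The strategy is to realise $R_\varepsilon$ as the error in a carefully engineered ansatz for $PU_{\mu,\xi}$: $R_\varepsilon$ will be bi-harmonic in $\Omega_\varepsilon$ with Navier boundary data that has been arranged to be small by the construction, so that a maximum-principle comparison with explicit barriers produces the pointwise bounds.

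First I check that $R_\varepsilon$ is bi-harmonic in $\Omega_\varepsilon$. Each summand is bi-harmonic there: $\upsilon_{\mu,\xi}$ by (\ref{1app1}); $H(\cdot,\xi)$, as the regular part of the biharmonic Green's function of $\Omega$, in $\Omega\supset\Omega_\varepsilon$; and both $\varphi_1(x/\varepsilon)$, $\varphi_2(x/\varepsilon)$ on $\mathbb{R}^N\setminus\{0\}\supset\Omega_\varepsilon$, since $\Delta\varphi_2\equiv 0$ and $\Delta\varphi_1=-2(N-4)\varphi_2$. In particular $\Delta R_\varepsilon$ is harmonic in $\Omega_\varepsilon$, and the proof reduces to Navier boundary estimates followed by a propagation.

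Second, I estimate the boundary traces. On $\partial\Omega$ I use $\upsilon_{\mu,\xi}=U_{\mu,\xi}$, $\Delta\upsilon_{\mu,\xi}=\Delta U_{\mu,\xi}$ together with the prescribed $H(x,\xi)=|x-\xi|^{-(N-4)}$, $\Delta H(x,\xi)=-2(N-4)|x-\xi|^{-(N-2)}$: the correction $\alpha_N\mu^{(N-4)/2}H(\cdot,\xi)$ is designed to cancel the Newton-potential behaviour of $U_{\mu,\xi}$, and expanding $(\mu^2+|x-\xi|^2)^{-(N-4)/2}=|x-\xi|^{-(N-4)}+O(\mu^2|x-\xi|^{-(N-2)})$ together with the analogous far-field expansion of $\Delta U_{\mu,\xi}$, the dominant terms cancel, leaving residuals that, combined with the small contributions of $a_i\varphi_i(x/\varepsilon)$ on $\partial\Omega$, are of the desired order. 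On $\partial B_\varepsilon$ I use $\varphi_1(x/\varepsilon)=\varphi_2(x/\varepsilon)=1$ and $\Delta\varphi_1(x/\varepsilon)\bigl|_{|x|=\varepsilon}=-2(N-4)/\varepsilon^2$: the definitions (\ref{defa1})--(\ref{defa2}) arrange $a_1+a_2=U(\tau)\mu^{-(N-4)/2}$ and $a_1\cdot\Delta\varphi_1(x/\varepsilon)\bigl|_{|x|=\varepsilon}=\Delta U(\tau)\mu^{-N/2}$, which kill respectively the leading Taylor coefficients of $U_{\mu,\xi}$ and $\Delta U_{\mu,\xi}$ at $x=0$ (where $(x-\xi)/\mu=-\tau$). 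Expanding $U((x-\xi)/\mu)$ and $\Delta U((x-\xi)/\mu)$ around $-\tau$ and using the radial symmetry $\nabla U(-\tau)=-\nabla U(\tau)$, the remainders satisfy $|R_\varepsilon|\le C\varepsilon\mu^{-(N-2)/2}$ and $|\Delta R_\varepsilon|\le C\varepsilon\mu^{-(N+2)/2}$ on $\partial B_\varepsilon$, matching the right-hand sides of (\ref{estimateR})--(\ref{estimatedeltaR}) evaluated at $|x|=\varepsilon$.

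Third, I propagate into $\Omega_\varepsilon$. Since $\Delta R_\varepsilon$ is harmonic, comparison with the harmonic barrier $\varphi_2(x/\varepsilon)=\varepsilon^{N-2}|x|^{-(N-2)}$, which equals $1$ on $\partial B_\varepsilon$ and is $O(\varepsilon^{N-2})$ on $\partial\Omega$, yields (\ref{estimatedeltaR}) directly. For $R_\varepsilon$ I split $R_\varepsilon=R^{h}+R^{p}$, where $R^{h}$ is harmonic in $\Omega_\varepsilon$ with the Dirichlet trace of $R_\varepsilon$ and $R^{p}$ solves $-\Delta R^{p}=-\Delta R_\varepsilon$ with zero Dirichlet data. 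The piece $R^{h}$ is controlled by the bi-harmonic barriers $\varphi_1(x/\varepsilon)$ and $\varphi_2(x/\varepsilon)$, whose $|x|^{-(N-4)}$ and $|x|^{-(N-2)}$ decays are precisely those in (\ref{estimateR}), while $R^{p}$ is estimated by integrating the Dirichlet Green's function of $-\Delta$ on $\Omega_\varepsilon$ against the pointwise bound already obtained for $\Delta R_\varepsilon$; the identity $-\Delta\varphi_1=2(N-4)\varphi_2$ is what regenerates the $|x|^{-(N-4)}$ component of the estimate.

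The main obstacle will be this final propagation: one must recover \emph{both} singular profiles $|x|^{-(N-4)}$ and $|x|^{-(N-2)}$ throughout $\Omega_\varepsilon$ with the correct $\varepsilon$-$\mu$ prefactors and no logarithmic loss. The explicit bi-harmonic functions $\varphi_1,\varphi_2$ are exactly the profiles with these decays, and their relation through the Laplacian is what makes a clean two-term estimate achievable; carrying out the comparison rigorously is the technical core of the argument.
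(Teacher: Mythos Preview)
Your proposal is correct and follows the same three-step strategy as the paper: verify that $R_\varepsilon$ is biharmonic, estimate its Navier traces on both components of $\partial\Omega_\varepsilon$ by exploiting the engineered cancellations built into $a_1,a_2$ and $H(\cdot,\xi)$, and then propagate into the interior by comparison with explicit barriers. The only difference is in the implementation of the last step. The paper first rescales, setting $\hat R_\varepsilon(y)=\mu^{-(N-4)/2}R_\varepsilon(\varepsilon y)$ on $\varepsilon^{-1}\Omega\setminus\overline B_1$, then encloses this domain in the annulus $B_{\varepsilon^{-1}\mathbf d}\setminus\overline B_1$ (with $\mathbf d=\mathrm{diam}\,\Omega$) and compares with the single four-parameter radial biharmonic function
\[
\Phi(y)=\frac{A}{|y|^{N-4}}+\frac{B}{|y|^{N-2}}+C|y|^2+D,
\]
choosing $A,B,C,D$ to match the four boundary bounds; this delivers both \eqref{estimateR} and \eqref{estimatedeltaR} at once after scaling back. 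Your plan instead iterates the Laplacian: first control the harmonic function $\Delta R_\varepsilon$ by comparison with $\varphi_2(x/\varepsilon)$, then split $R_\varepsilon=R^h+R^p$ and handle the pieces separately via harmonic barriers and the Green's function. Both routes work, but be aware that the single profile $\varphi_2(x/\varepsilon)$ by itself need not dominate the (small but nonzero) traces of $\Delta R_\varepsilon$ on $\partial\Omega$; you will have to add a term that is large enough on the outer boundary, which is exactly the role played by the $C|y|^2+D$ part of the paper's barrier. Once that correction is included, your argument is equivalent to the paper's, just organised as two successive harmonic comparisons rather than one biharmonic comparison.
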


\begin{proof}
It is useful to   remark that
$$a_1(\varepsilon,d,\tau) = \alpha_N\frac{1}{2}\frac{\varepsilon^{2} (2|\tau|^2+N)}{\mu^{\frac{N}{2}}(1+|\tau|^2)^{\frac{N}{2}}}$$
and
$$a_2(\varepsilon,d,\tau) =
\alpha_N\left( \frac{1}{\mu^{\frac{N-4}{2}}(1+|\tau|^{2})^{\frac{N-4}{2}}}-\frac{1}{2}\frac{\varepsilon^{2}(2|\tau|^2+N)}{\mu^{\frac{N}{2}}(1+|\tau|^2)^{\frac{N}{2}}}  \right).
$$

The function $R_\varepsilon$ solves the problem
\begin{equation*}
\left\{
\begin{array}{ll}
\Delta^2 R_\varepsilon=0&\mbox{ in }\Omega\setminus \overline{B}_\varepsilon,\medskip\\
R_\varepsilon=\displaystyle \alpha_N\bigg(-\frac{\mu^{\frac{N-4}{2}}}{(\mu^2+|x-\xi|^2)^{\frac{N-4}{2}}}+
\frac{\mu^{\frac{N-4}{2}}}{|x-\xi|^{N-4}}+a_1\frac{\varepsilon^{N-4}}{|x|^{N-4}}+a_2\frac{\varepsilon^{N-2}}{|x|^{N-2}}\bigg)&\mbox{ on }\partial\Omega,\medskip\\
R_\varepsilon=\displaystyle  \alpha_N\bigg(-\frac{\mu^{\frac{N-4}{2}}}{(\mu^2+|x-\xi|^2)^{\frac{N-4}{2}}}+
\mu^{\frac{N-4}{2}}H(x,\xi)+ \frac{1}{\mu^{\frac{N-4}{2}}(1+|\tau|^2)^{\frac{N-4}{2}}}  \bigg)&\mbox{ on }\partial B_\varepsilon,\medskip\\
\Delta R_\varepsilon=\displaystyle \alpha_N(N-4)\bigg(\frac{\mu^{\frac{N-4}{2}} (2|x-\xi|^2+N\mu^2)}{(\mu^2+|x-\xi|^2)^{\frac{N}{2}}}-
\frac{2\mu^{\frac{N-4}{2}} }{|x-\xi|^{N-2}}-\frac{2|\tau|^2+N}{\mu^{\frac{N}{2}}(1+|\tau|^2)^{\frac{N}{2}}}\frac{\varepsilon^{N-2}}{|x|^{N-2}} \bigg)&\mbox{ on }\partial\Omega,\medskip\\
\Delta R_\varepsilon=\displaystyle \alpha_N (N-4)\bigg(\frac{\mu^{\frac{N-4}{2}}(2|x-\xi|^2+N\mu^2)}{(\mu^2+|x-\xi|^2)^{\frac{N}{2}}}+
\frac{\mu^{\frac{N-4}{2}} }{N-4}\Delta H(x,\xi) -\frac{2|\tau|^2+N}{\mu^{\frac{N}{2}}(1+|\tau|^2)^{\frac{N}{2}}} \bigg)&\mbox{ on }\partial B_\varepsilon.
\end{array}
\right.
\end{equation*}
Let us set $\hat{R}_\varepsilon(y):=\mu^{-\frac{N-4}{2}} R_\varepsilon (\varepsilon y),\quad y\in (\varepsilon^{-1}\Omega\setminus \overline{B}_1)$. It solves
the problem
\begin{equation*}
\left\{
\begin{array}{ll}
\Delta^2 \hat{R}_\varepsilon=0&\mbox{ in }(\varepsilon^{-1}\Omega\setminus \overline{B}_1),\medskip\\
\hat{R}_\varepsilon=\displaystyle \alpha_N\bigg(-\frac{1}{(\mu^2+|\varepsilon y-\xi|^2)^{\frac{N-4}{2}}}+
\frac{1}{|\varepsilon y-\xi|^{N-4}}+\frac{a_1}{\mu^{\frac{N-4}{2}}}\frac{1}{|y|^{N-4}}+\frac{a_2}{\mu^{\frac{N-4}{2}}}\frac{1}{|y|^{N-2}}\bigg)&\mbox{ on }\partial(\varepsilon^{-1}\Omega),\medskip\\
\hat{R}_\varepsilon=\displaystyle  \alpha_N\bigg(-\frac{1}{(\mu^2+|\varepsilon y-\xi|^2)^{\frac{N-4}{2}}}+
H(\varepsilon y,\xi)+ \frac{1}{\mu^{N-4}(1+|\tau|^2)^{\frac{N-4}{2}}}  \bigg)&\mbox{ on }\partial B_1,\medskip\\
\Delta \hat{R}_\varepsilon=\displaystyle  \alpha_N(N-4)\bigg(\frac{2|\varepsilon y-\xi|^2+N\mu^2}{(\mu^2+|\varepsilon y-\xi|^2)^{\frac{N}{2}}}-
\frac{2 }{|\varepsilon y-\xi|^{N-2}}-\frac{2|\tau|^2+N}{\mu^{N-2}(1+|\tau|^2)^{\frac{N}{2}}}\frac{1}{|y|^{N-2}} \bigg)\varepsilon^2&\mbox{ on }\partial(\varepsilon^{-1}\Omega),\medskip\\
\Delta \hat{R_\varepsilon}=\displaystyle  \alpha_N (N-4)\bigg(\frac{2|\varepsilon y-\xi|^2+N\mu^2}{(\mu^2+|\varepsilon y-\xi|^2)^{\frac{N}{2}}}+\frac{1}{N-4}\Delta H(\varepsilon y,\xi) -\frac{2|\tau|^2+N}{\mu^{N-2}(1+|\tau|^2)^{\frac{N}{2}}} \bigg) \varepsilon^2&\mbox{ on }\partial B_1.
\end{array}
\right.
\end{equation*}
Moreover, the following estimates hold true for $\hat{R}_{\varepsilon}$.
\begin{equation}\label{estimatehatRbOmega}
0\leq \hat{R}_\varepsilon (y) = O(\mu^2)\quad\forall y\in\partial (\varepsilon^{-1}\Omega),
\end{equation}
\begin{equation}\label{estimatehatRbhole}
 |\hat{R}_\varepsilon (y) | =O\left(\frac{\varepsilon }{\mu^{N-3}}\right)\quad\forall y\in \partial B_1,
\end{equation}

\begin{equation}\label{estimateDhatRbOmega}
0\leq -\Delta \hat{R}_\varepsilon (y)=  O(\varepsilon^2\mu^2)\quad\forall y\in\partial (\varepsilon^{-1}\Omega)
\end{equation}
and
\begin{equation}\label{estimateDhatRbhole}
  |\Delta \hat{R}_\varepsilon (y)| = O\left(\frac{ \varepsilon^{3 }}{\mu^{{N}-1}}\right)\quad\forall y\in\partial B_1.
 \end{equation}

Now, let $R>0$, $\mathbf{d}:=\mbox{diam}\,\Omega$ and let $\Phi$ be a solution of  the problem
\begin{equation*}
\left\{\begin{array}{rll}
\Delta^2 \Phi= 0&\mbox{in } B_{\varepsilon^{-1}\mathbf{d}}\setminus \overline{B}_1,\medskip\\
\Phi=\alpha&\mbox{on }\partial B_{\varepsilon^{-1}\mathbf{d}},\medskip\\
\Phi=\beta &\mbox{on }\partial B_1,\medskip\\
\Delta \Phi= \alpha'&\mbox{on }\partial B_{\varepsilon^{-1}\mathbf{d}},\medskip\\
\Delta \Phi= \beta'&\mbox{on }\partial B_1,
\end{array}\right.
\end{equation*}
for some arbitrary   numbers $\alpha, \beta, \alpha'$ and $\beta'$.
A straightforward computation shows that
$$\Phi(y)={A\over |y|^{N-4}}+{B\over |y|^{N-2}}+C|y|^2+D$$ is a solution to such a problem for a suitable choice of $A,B,C$ and $D.$
If we choose
$$\alpha=c \mu^2,\ \beta= c \frac{\varepsilon^{1-\sigma}}{\mu^{N-4}},\ \alpha'=c\varepsilon^2\mu^2,\ \beta'=c\frac{ \varepsilon^{3-\sigma}}{\mu^{{N}-2}}$$
for some positive constant $c,$ it is easy to check that
$$A=O\left( \frac{ \varepsilon^{3 }}{\mu^{{N}-1}}\right),\ B=O\left(\frac{\varepsilon }{\mu^{N-3}}\right),\ C=O\left(\varepsilon^2\mu^2\right),\ D=O(1).$$

Then,  by using  a comparison argument   and taking into account estimates (\ref{estimatehatRbOmega})-(\ref{estimateDhatRbhole}), we deduce
\begin{equation}\label{estimatehatR}
|\hat{R}_\varepsilon(y)|\leq   M\left(\frac{\varepsilon^3  }{\mu^{{N}-1}|y|^{N-4}} +\frac{\varepsilon  }{\mu^{N-3}|y|^{N-2}}+ \varepsilon^2\mu^2|y|^2+ \varepsilon  \right)\quad \forall y\in \Omega_\varepsilon,
\end{equation}
and
\begin{equation}\label{estimatedeltahatR}
|\Delta\hat{R}_\varepsilon(y)|\leq   M\left(\frac{\varepsilon^3  }{\mu^{{N}-1}|y|^{N-2}} + \varepsilon^2\mu^2 \right)\quad \forall y\in \Omega_\varepsilon,
\end{equation}
for some  positive constant    $M$, which is  independent  on $\varepsilon$. Therefore,  by     (\ref{estimatehatR}) we deduce  (\ref{estimateR})
and by    (\ref{estimatedeltahatR}), we deduce  (\ref{estimatedeltaR}).
\end{proof}

\setcounter{equation}{0}
\section{Scheme of the proof}\label{section3}

  As we said, the proof of Theorem \ref{maintheorem} relies on a very well known
Lyapunov-Schmidt reduction. In this section we will sketch the main steps of the proof skipping many details, because
  the arguments used in the proof are very similar to the ones used in \cite{DFM,MP}.
  The unique new computation is the estimate \eqref{estimateJ0} of the reduced energy.

 \medskip
First of all, it is useful to perform a change of variables.  Let  us consider the expanded domain
$
\tilde \Omega_\varepsilon:=\varepsilon^{-\sigma} \Omega _\varepsilon
$
and $y=\varepsilon^{-\sigma}x\in\tilde\Omega_\varepsilon$, $x\in\Omega _\varepsilon$, where $\sigma$ is defined in \eqref{sigma}.
Therefore, $u$ solves the problem (\ref{P}) if and only if the function
\begin{equation}\label{defve}
 v_{\varepsilon}(y)=\varepsilon^{\sigma{N-4\over2}}\, u(\varepsilon^{\sigma}y),\quad y\in\tilde\Omega_{\varepsilon}
 \end{equation}
 solves the problem
\begin{equation}\label{Pve}
\left\{
\begin{array}{rlll}
\Delta^2 v\!\!\!&\!= f( v) & {\mbox{ in }} \tilde\Omega_{\varepsilon},\medskip\\
 v\!\!\!&\!= \Delta v=0 & {\mbox{ on }} \partial \tilde\Omega_{\varepsilon}.
\end{array}
\right.
\end{equation}
Here $f(v)=(v^+)^p$ and
 $
p:=\frac{N+4}{N-4}.
 $
 We point out that the operator $\Delta^2$ with Dirichlet boundary condition satisfies the maximum principle. So any solution to \eqref{Pve} is a positive function.

In the expanded variables, the  solution  we are looking  for   looks like
\begin{equation}\label{solutionv}
v(y)=V(y)+\tilde \phi(y),\  y\in\tilde\Omega_\varepsilon,\quad
V(y):=\varepsilon^{\sigma{N-4\over2}} PU_{\mu,\xi}(\varepsilon^{\sigma}y)\ \hbox{ and } \ \tilde\phi(y):=\varepsilon^{\sigma{N-4\over2}} {\phi}(\varepsilon^{\sigma} y).
\end{equation}
 It is important to point out that the function $V$ is nothing but the projection onto $\mathrm H^2(\tilde\Omega_\varepsilon)\cap\mathrm H^1_0(\tilde\Omega_\varepsilon)$
 of the function $\varepsilon^{\sigma{N-4\over2}}  U_{\mu,\xi}(\varepsilon^{\sigma}y)= U_{d,\xi'}( y)$ where we denote by $\xi'$ the point $\varepsilon^{-\sigma}\xi $
 and by $d$ the number $\varepsilon^{-\sigma}\mu$ (see \eqref{setting}).

 In terms of $\tilde\phi$ problem \eqref{Pve} rewrites as

 \begin{equation}\label{Pve2}
\left\{
\begin{array}{rlll}
L  (\tilde \phi)\!\!\!&\!=  N (\tilde \phi)+E  & {\mbox{ in }} \tilde\Omega_{\varepsilon},\medskip\\
\tilde \phi\!\!\!&\!= \Delta \tilde \phi=0 & {\mbox{ on }} \partial \tilde\Omega_{\varepsilon}.
\end{array}
\right.
\end{equation}

 where the linear operator $L$ is defined by
 $$L  (\tilde \phi):=\Delta^2  \tilde \phi-f'(V)\tilde \phi, $$
 the second order term $N(\phi)$ is defined by
 $$ N (\tilde \phi):= f\left(V +\tilde \phi\right) -  f(V  )-f'(V)\tilde \phi $$
and the error term $E$ is defined by
\begin{equation}E :=   f(V)-f(U_{d,\xi'}). \label{eeee}
 \end{equation}

 To prove our result we follow the usual strategy of the Lyapunov-Schmidt procedure.

\bigskip
\emph{ Step 1. We solve a nonlinear problem. }

More precisely,
 given $d>0$ and $\tau\in\mathbb R^N$ (see \eqref{setting}), we find a function $\tilde\phi=\tilde\phi(\varepsilon,d,\tau)$ such that for some real numbers $c_i$'s
 \begin{equation}\label{Pve3}
\left\{
\begin{array}{rlll}
L  (\tilde \phi)\!\!\!&\!=  N (\tilde \phi)+E+\sum\limits_{i=0}^Nc_if'(V)W_i  & {\mbox{ in }} \tilde\Omega_{\varepsilon},\medskip\\
\tilde \phi\!\!\!&\!= \Delta \tilde \phi=0 & {\mbox{ on }} \partial \tilde\Omega_{\varepsilon} ,\medskip\\
\int\limits_{\tilde\Omega_\varepsilon}\tilde \phi(y)f'(V)(y)W_i(y)dy\!\!\!&\!=0& {\mbox{ for any }}  i=0,1,\dots,N.
\end{array}
\right.
\end{equation}
 The  functions $W_i$ are defined as follows. It is known (see \cite{LuWei}) that the set of the solutions to the linearized equation
 $$ \Delta^2 \vartheta - f'( {U}_{\mu,\xi})\vartheta=0\ \hbox{ in}\ {\mathbb{R}}^N,\quad \vartheta\in\mathcal D^{2,2}(\mathbb R^N)
$$
is a $(N+1)-$dimensional linear space spanned by the functions
\begin{equation*}
Z_0(x):=\frac{\partial U_{\mu,\xi}}{\partial \mu}(x)=\alpha_N \left(\frac{N-4}{2}\right)\mu^{\frac{N-6}{2}} \frac{|x- \xi|^2-\mu^2}{(\mu^2+|x-\xi|^2)^{\frac{N-2}{2}}},
\end{equation*}
\begin{equation*}
\displaystyle Z_i(x):=\frac{\partial U_{\mu,\xi}}{\partial \xi_i}(x)=\alpha_N(N-4) {\mu}^{\frac{N-4}{2}} \frac{ x_i-\xi_i}{(\mu^2+|x-\xi|^2)^{\frac{N-2}{2}}},\quad i=1,2,\dots ,N.
\end{equation*}
We denote by  $PZ_i$ the projection of $Z_i$ onto   $\mathrm H^1_0(\Omega _\varepsilon)\cap \mathrm H^2(\Omega\ _\varepsilon)$  and we set
$$
W_i(y):=\varepsilon^{\sigma{N-4\over2}}PZ_i(\varepsilon^{\sigma}y),\quad y\in\tilde \Omega_\varepsilon\quad i=0,1,2,\ldots,N.
$$

 In order to solve problem \eqref{Pve3} it is necessary to study the linear problem naturally associated to it. More precisely,
  given $d>0$ and $\tau\in\mathbb R^N$ (see \eqref{setting}) and a function $h\in C^0 (\overline{\tilde \Omega _\varepsilon} )$, find a function $\tilde\phi $ such that   for some real numbers $c_i$'s
 \begin{equation}\label{Pve4}
\left\{
\begin{array}{rlll}
L  (\tilde \phi)\!\!\!&\!=  h+\sum\limits_{i=0}^Nc_if'(V)W_i  & {\mbox{ in }} \tilde\Omega_{\varepsilon},\medskip\\
\tilde \phi\!\!\!&\!= \Delta \tilde \phi=0 & {\mbox{ on }} \partial \tilde\Omega_{\varepsilon} ,\medskip\\
\int\limits_{\tilde\Omega_\varepsilon}\tilde \phi(y)f'(V)(y)W_i(y)dy\!\!\!&\!=0& {\mbox{ for any }}  i=0,1,\dots,N.
\end{array}
\right.
\end{equation}

 To study the invertibility of the linear operator $L$ we introduce the $\mathrm L^\infty-$weighted spaces $\mathrm L^\infty_*(\tilde\Omega_\varepsilon)$
 and $\mathrm L^\infty_{**}(\tilde\Omega_\varepsilon)$ to be, respectively, the spaces of functions defined on $\tilde\Omega_\varepsilon$
 with finite $\|\cdot\|_*$ and $\|\cdot\|_{**}$  norms defined by
 $$\|\eta\|_*=\sup\limits_{y\in\tilde\Omega_\varepsilon}\sum\limits_{i=0}^3\left|(1+|y-\xi'|^2)^{2+i\over 2}\sum\limits_{|\alpha|=i}D^\alpha\eta(y)\right|$$
and
$$\|\eta\|_{**}=\sup\limits_{y\in\tilde\Omega_\varepsilon}\left|(1+|y-\xi'|^2)^4\eta(y)\right|.$$

The operator $L$ is uniformly invertible with respect to the above weighted norms provided $\varepsilon$ is small enough as it is proved in the next result.
\begin{proposition}\label{p1}
Let $\delta>0$ be fixed and assume that (\ref{setting}) holds true. Then there exist constants $\varepsilon_0 >0$ and  $C>0$, such that  for every $0<\varepsilon<\varepsilon_0$ and $h\in C^0 (\overline{\tilde \Omega _\varepsilon} )$,   problem (\ref{Pve4}) admits a unique solution $
T_\varepsilon (d,{\xi'},h)$. Furthermore, the map $(d,{\xi'})\mapsto   T_{\varepsilon}(d,{\xi'},h)$ is of class  $C^1$ for the $\|\cdot\|_*-$norm and satisfies
$$
\| T_{\varepsilon}(d,{\xi'},h)\|_* \leq C \| h \|_{**} ,\qquad \| \nabla_{(d,{\xi'})} T_{\varepsilon}(d,{\xi'},h)\|_{*}\leq C \|h\|_{**}.
$$
Moreover,
\begin{equation}\label{ciref}
|c_i|<C\|h\|_{**}\quad \forall i.
\end{equation}
\end{proposition}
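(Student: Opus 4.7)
The plan is to follow the standard Lyapunov--Schmidt strategy as developed in \cite{DFM,MP}, adapted to the weighted spaces $\mathrm L^\infty_*$ and $\mathrm L^\infty_{**}$ chosen so that the linear theory around a single bubble becomes uniformly invertible as $\varepsilon\to 0$. The core is an a priori estimate of the form $\|\tilde\phi\|_*\le C\|h\|_{**}$ for solutions of \eqref{Pve4}, after which existence, uniqueness and $C^1$ dependence on $(d,\xi')$ follow from standard Fredholm/variational arguments and the implicit function theorem. The bound on the Lagrange multipliers is then immediate by projecting the equation.

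First I would handle the multipliers $c_i$. Testing \eqref{Pve4} against $W_j$, using the orthogonality $\int \tilde\phi\, f'(V)W_i=0$, and integrating by parts one gets a linear system
\[
\sum_{i=0}^N c_i \int_{\tilde\Omega_\varepsilon} f'(V)\,W_i W_j \,dy \;=\; \int_{\tilde\Omega_\varepsilon} L(\tilde\phi)W_j\,dy \;-\;\int_{\tilde\Omega_\varepsilon} h\,W_j\,dy .
\]
Since $W_i$ is the $H^2\cap H^1_0$-projection of $Z_i$ and the bubble is concentrated on a compact scale around $\xi'=d\tau$, the Gram matrix $\int f'(V)W_iW_j$ converges as $\varepsilon\to0$ to the corresponding nondegenerate matrix on $\mathbb{R}^N$ associated with the limit problem \eqref{Pinf}; in particular it is uniformly invertible. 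Controlling the right--hand side by $\|h\|_{**}$ and $\|\tilde\phi\|_*$ then gives \eqref{ciref}, once the a priori estimate for $\tilde\phi$ is in hand.

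The a priori estimate I would prove by contradiction with a blow-up argument. Suppose there exist $\varepsilon_n\to 0$, $(d_n,\tau_n)$ in the compact parameter range \eqref{setting}, $h_n$ with $\|h_n\|_{**}\to 0$, constants $c_{i,n}$, and $\tilde\phi_n$ solving \eqref{Pve4} with $\|\tilde\phi_n\|_*=1$. First one checks $c_{i,n}\to 0$ from the previous paragraph. Setting $\hat\phi_n(z):=\tilde\phi_n(z+\xi'_n)$, the definition of the $*$ norm gives locally uniform $C^{3,\alpha}$ bounds on $\hat\phi_n$, so along a subsequence $\hat\phi_n\to\hat\phi_\infty$ in $C^3_{loc}(\mathbb{R}^N)$, with $|\hat\phi_\infty(z)|\le(1+|z|^2)^{-1}$. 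Passing to the limit in the equation, using that the rescaled inner hole $B_{\varepsilon_n^{1-\sigma}}$ shrinks to a point (a removable singularity for $H^2$ solutions) and that the outer boundary $\partial(\varepsilon_n^{-\sigma}\Omega)$ escapes to infinity, $\hat\phi_\infty$ solves the linearized problem on $\mathbb{R}^N$ for the bubble $U_{d_\infty,0}$; by the nondegeneracy result of \cite{LuWei} it lies in the span of $Z_0,\dots,Z_N$. The orthogonality conditions pass to the limit and force $\hat\phi_\infty\equiv 0$. Finally one rules out that the mass of $\tilde\phi_n$ escapes by constructing a barrier of the form $B(y)=\beta_1(1+|y-\xi'_n|^2)^{-1}+\beta_2 G_\varepsilon(y,\xi'_n)$-type functions (as in \cite{DFM,MP}), which together with the maximum principle for $\Delta^2$ under Navier boundary conditions and the $\|\cdot\|_{**}$ bound on $h_n+\sum c_{i,n}f'(V)W_i$ upgrades the local convergence to $\|\tilde\phi_n\|_*\to 0$, contradicting $\|\tilde\phi_n\|_*=1$.

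With the a priori estimate established, existence and uniqueness of $T_\varepsilon(d,\xi',h)$ follow from Fredholm's alternative applied to $L$ restricted to the subspace of functions orthogonal (in the $f'(V)W_i$ inner product) to $\mathrm{span}\{W_0,\dots,W_N\}$, and the $C^1$ dependence on $(d,\xi')$ with the same norm bound comes from formal differentiation of \eqref{Pve4} and applying the already-proven linear theory to the differentiated system. The main obstacle I anticipate is the final step of the blow-up argument: because the rescaled domain has two moving boundaries (a shrinking inner hole and a diverging outer one), building a global barrier that simultaneously respects the Navier data on both and captures the $|y-\xi'|^{-2}$ decay demanded by $\|\cdot\|_*$ is the delicate point; here Proposition \ref{cru} and the explicit pieces $\varphi_1,\varphi_2,|y|^2$ arising from the auxiliary problem \eqref{paux1} suggest the right ansatz for such a barrier.
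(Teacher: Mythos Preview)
Your outline is correct and follows precisely the strategy of Section~5 of \cite{MP}, which is exactly what the paper invokes (the paper's proof is a one-line reference to \cite{MP}). The contradiction/blow-up argument, the use of the nondegeneracy result \cite{LuWei}, the Gram-matrix control of the $c_i$, and the barrier step are all as in \cite{DFM,MP}; there is no substantive difference between your approach and the paper's.
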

\begin{proof}
We argue exactly as in Section 5 of \cite{MP}.
\end{proof}

Finally, we have all the ingredients to solve problem \eqref{Pve}.

\begin{proposition}\label{p3}
Let $\delta>0$ be fixed and assume that (\ref{setting}) holds true. Then there exist constants $\varepsilon_0 >0$ and  $C>0$ such that  for every $0<\varepsilon<\varepsilon_0$  there exists a unique solution $\tilde\phi=\tilde\phi( d,\xi')$ to   problem (\ref{Pve3})  such that   the map $(d,{\xi'} )\mapsto  \tilde\phi(d,\xi')$ is of class  $C^1$ for the $\|\cdot\|_*-$norm and
$$
 \|\tilde\phi \|_{*} \leq C   \varepsilon^{(N-2)(N-4)\over   2(N-3) }  ,\qquad
 \| \nabla_{(d,\xi')} \tilde\phi \|_{*} \le C  \varepsilon^{(N-2)(N-4)\over 2( N-3) } .
$$
\end{proposition}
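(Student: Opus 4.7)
The plan is to recast problem \eqref{Pve3} as the fixed-point equation
$$\tilde\phi=\mathcal{F}_\varepsilon(\tilde\phi):=T_\varepsilon\bigl(d,\xi',\,N(\tilde\phi)+E\bigr),$$
where $T_\varepsilon$ is the $\varepsilon$-uniform right inverse of the projected linear operator provided by Proposition \ref{p1}. Since $\|T_\varepsilon(d,\xi',h)\|_*\le C\|h\|_{**}$, everything boils down to a sharp bound for the error $E=f(V)-f(U_{d,\xi'})$ in the $\|\cdot\|_{**}$ norm together with the standard quadratic estimate for $N$.

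The key estimate is
$$\|E\|_{**}\le C\,\varepsilon^{(N-2)(N-4)/(2(N-3))},$$
and this is precisely where Proposition \ref{cru} enters. Undoing the scaling \eqref{solutionv} gives $V(y)-U_{d,\xi'}(y)=\varepsilon^{\sigma(N-4)/2}\bigl[PU_{\mu,\xi}-U_{\mu,\xi}\bigr](\varepsilon^\sigma y)$; inserting the decomposition \eqref{defR} together with the sizes of $a_1$, $a_2$, of $H(x,\xi)$ near $\xi_0$, and of the remainder \eqref{estimateR}, and then using $\mu=d\varepsilon^\sigma$ with the identity $\sigma(N-4)=(N-2)(N-4)/(2(N-3))$, I obtain $\|V-U_{d,\xi'}\|_\infty=O(\varepsilon^{\sigma(N-4)})$. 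A mean-value argument together with the fact that $f'(U_{d,\xi'})(y)\sim(1+|y-\xi'|^2)^{-4}$, which matches the weight in $\|\cdot\|_{**}$ exactly because $(p-1)(N-4)/2=4$, upgrades this pointwise control to the claimed weighted bound. For the nonlinear part one proves, arguing as in \cite{DFM,MP} and splitting the cases $N\ge 8$ and $5\le N\le 7$ according to whether $p-1\le 1$,
$$\|N(\tilde\phi_1)-N(\tilde\phi_2)\|_{**}\le C\bigl(\|\tilde\phi_1\|_*+\|\tilde\phi_2\|_*\bigr)^{\min(p-1,1)}\|\tilde\phi_1-\tilde\phi_2\|_*,$$
which in particular yields $\|N(\tilde\phi)\|_{**}\le C\|\tilde\phi\|_*^{1+\min(p-1,1)}$.

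With these two ingredients, a routine check shows that $\mathcal{F}_\varepsilon$ maps the closed ball $\{\|\tilde\phi\|_*\le K\varepsilon^{(N-2)(N-4)/(2(N-3))}\}$ into itself and is a contraction there, provided $K$ is chosen sufficiently large and $\varepsilon$ sufficiently small; the Banach fixed-point theorem then yields a unique $\tilde\phi=\tilde\phi(d,\xi')$ satisfying the first asserted bound. The $C^1$ dependence on $(d,\xi')$ and the matching derivative estimate are obtained by differentiating the fixed-point identity and combining the $C^1$ bounds on $T_\varepsilon$ from Proposition \ref{p1} with the analogous weighted estimates for $\partial_{(d,\xi')}E$ and $\partial_{(d,\xi')}N$. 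I expect the principal obstacle to be the error estimate for $E$: Proposition \ref{cru} must be applied carefully, since after rescaling the expression for $V-U_{d,\xi'}$ mixes the bulk regime near $\xi'$ (where the dominant contribution comes from $\alpha_N\mu^{(N-4)/2}H(\cdot,\xi)$) with the boundary-layer regime near $\partial B_1$ produced by the terms $a_i\varphi_i(x/\varepsilon)$, and it is the compatibility of the resulting pointwise bound with the weighted norm $\|\cdot\|_{**}$ that ultimately pins down the exponent $\sigma(N-4)$.
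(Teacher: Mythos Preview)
Your strategy coincides with the paper's: the proof there is the two–line remark that one argues exactly as in Section 6 of \cite{MP} once Proposition \ref{cru} yields the error estimate $\|E\|_{**}=O\bigl(\varepsilon^{(N-2)(N-4)/(2(N-3))}\bigr)$. Your fixed-point formulation via $T_\varepsilon$, the superlinear control of $N(\tilde\phi)$, the contraction, and the $C^1$ dependence by differentiation are precisely the template of \cite{DFM,MP} to which the paper defers.

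There is, however, a concrete gap in the way you justify the error bound. The claim $\|V-U_{d,\xi'}\|_\infty=O(\varepsilon^{\sigma(N-4)})$ is false in the boundary layer near the rescaled hole $\partial B(0,\varepsilon^{1-\sigma})$: on that set $V$ vanishes while $U_{d,\xi'}(y)\to \alpha_N d^{-(N-4)/2}(1+|\tau|^2)^{-(N-4)/2}$, so the difference is of order one, not $\varepsilon^{\sigma(N-4)}$. Equivalently, in the decomposition \eqref{defR} the term $a_2\varphi_2(x/\varepsilon)$ rescales to $c\,\varepsilon^{(1-\sigma)(N-2)}|y|^{-(N-2)}$, which equals a nonzero constant on $|y|=\varepsilon^{1-\sigma}$. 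Consequently the crude mean-value inequality $|E|\le pU_{d,\xi'}^{p-1}|V-U_{d,\xi'}|$ does not by itself produce the weighted bound you need. The paper simply asserts that bound and sends the reader to \cite{MP}; to close the argument you must follow that reference and use the full pointwise profile from Proposition \ref{cru}—exploiting the decay of the $\varphi_i$ pieces away from the hole—and estimate $(1+|y-\xi'|^2)^4|E(y)|$ separately in the boundary-layer region and in the bulk, rather than passing through a global $L^\infty$ bound that does not hold. You acknowledge this compatibility issue in your closing paragraph, but the body of your argument does not actually carry it out.
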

\begin{proof}
We argue exactly as in Section 6 of \cite{MP}, once one has the estimate of the error term   $E$    defined   in (\ref{eeee}).
Indeed by Proposition \ref{cru} we deduce
 $\|E\|_{**}=O ( \varepsilon^{(N-2)(N-4)\over   2(N-3) })$.
 \end{proof}

\bigskip
\emph{ Step 2. We reduce the problem to a finite dimensional one. }

 Let us consider the function $J_\varepsilon:\mathbb R^+\times\mathbb R^N\to\mathbb R$ defined by
\begin{equation}\label{je}
J_\varepsilon(d,\tau):=I_\varepsilon(V+\tilde\phi ),\end{equation}
where $\phi $ is the function found in Proposition \ref{p3} and the functional $I_\varepsilon:H^{2}(\tilde\Omega_\varepsilon)\cap H^1_0(\tilde\Omega_\varepsilon)\to\mathbb R$ is defined by
$$
I_\varepsilon(v):=\frac{1}{2}\int_{\tilde \Omega_\varepsilon} |\Delta v|^2dy-\frac{1}{p+1 } \int_{\tilde\Omega_\varepsilon}(v^+)^{p+1 }dy.
$$

\begin{proposition}
\label{rido}
\begin{itemize}
\item[(i)]
The function $v=V+\tilde\phi$ is a solution to problem \eqref{Pve2}, namely $c_i=0$ in \eqref{Pve3} for all $i$'s, if and only if $(d,\xi)$ is a critical point of $J_\varepsilon.$

\item[(ii)]
It holds true that
\begin{equation*}\label{estimateJ0}
J_{\varepsilon }(d,\tau)=a_N+\varepsilon^{(N-2)(N-4)\over 2( N-3)}\Psi(d,\tau)+o\left(\varepsilon^{(N-2)(N-4)\over 2(N-3)}\right),
\end{equation*}
$C^1-$uniformly with respect to $(d,\tau)$ in compact sets of $\mathbb R^+\times \mathbb R^n.$
Here
\begin{equation}\label{PSI}
\Psi(d,\tau):= -b_n\Delta U(\tau)U(\tau)\frac{1}{d^{N-2}}+c_N H(0,0) d^{N-4},\end{equation}
where the   $a_N,$ $b_N$ and $c_N$ are  positive constants defined by

\begin{align*}
&a_N:=   \int_{\mathbb{R}^N} U^{2N\over N-4}(y)dy, \ b_N:={3\over4}(N-2)\mathrm{meas} \left(\mathbb S^{N-1}\right)\
\  c_N:={1\over2}\alpha_N  \int_{\mathbb{R}^N}U^{\frac{N+4}{N-4}}(y)dy.
\end{align*}
\end{itemize}

\end{proposition}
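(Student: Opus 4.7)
The proposition has two parts, both of which follow the standard playbook of the Lyapunov--Schmidt method, with the new work concentrated in the expansion \eqref{estimateJ0}. For (i), the plan is to differentiate $J_\varepsilon(d,\tau) = I_\varepsilon(V+\tilde\phi)$ in $d$ and in each $\tau_k$, use the chain rule together with the fact that $\tilde\phi$ satisfies \eqref{Pve3}, and observe that $\partial_\mu PU_{\mu,\xi}$ and $\partial_{\xi_i}PU_{\mu,\xi}$ become (after the rescaling $y=\varepsilon^{-\sigma}x$) precisely the functions $W_i$ up to a multiplicative factor. Using Proposition \ref{p3} to control $\nabla_{(d,\tau)}\tilde\phi$ in the $\|\cdot\|_*$-norm, one obtains $\partial_{(d,\tau)_k}J_\varepsilon = \sum_{i=0}^N c_i\!\int f'(V)W_iW_k + o(1)$, a nearly-diagonal linear system whose diagonal entries $\int f'(V)W_k^2$ tend to nonzero constants. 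Hence for $\varepsilon$ small the map $c\mapsto \nabla J_\varepsilon$ is invertible, so $c_i=0$ for all $i$ if and only if $(d,\tau)$ is a critical point of $J_\varepsilon$. This is exactly the argument of \cite{DFM,MP}, which I would simply transcribe.

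\textbf{Reduction of (ii) to the first-order term.} To prove the expansion I would write $J_\varepsilon = I_\varepsilon(V) + [J_\varepsilon - I_\varepsilon(V)]$ and control the bracket. Taylor expanding around $V$,
\[ J_\varepsilon(d,\tau) - I_\varepsilon(V) = I_\varepsilon'(V)[\tilde\phi] + \tfrac{1}{2}I_\varepsilon''(V)[\tilde\phi,\tilde\phi] + O(\|\tilde\phi\|_*^{p+1}), \]
with $|I_\varepsilon'(V)[\tilde\phi]| \le \|E\|_{**}\|\tilde\phi\|_{*} = O(\varepsilon^{(N-2)(N-4)/(N-3)})$ from the estimate on $E$ used in Proposition \ref{p3}; the quadratic and cubic-or-higher terms are smaller. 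All these are $o(\varepsilon^{(N-2)(N-4)/(2(N-3))})$, so it suffices to expand $I_\varepsilon(V)$. Undoing the scaling \eqref{defve} yields $I_\varepsilon(V) = I(PU_{\mu,\xi})$, where $I$ is the corresponding functional on $\Omega_\varepsilon$, and the task reduces to a careful expansion of $I(PU_{\mu,\xi})$ under the ansatz \eqref{setting}.

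\textbf{Expansion of $I(PU_{\mu,\xi})$.} Using the Navier boundary conditions for $PU_{\mu,\xi}$ together with $\Delta^2 PU_{\mu,\xi} = U_{\mu,\xi}^p$, integration by parts gives $\int_{\Omega_\varepsilon}|\Delta PU_{\mu,\xi}|^2 = \int_{\Omega_\varepsilon} PU_{\mu,\xi}\cdot U_{\mu,\xi}^p$. Writing $\upsilon := U_{\mu,\xi}-PU_{\mu,\xi} = \alpha_N\mu^{(N-4)/2}H(\cdot,\xi) + a_1\varphi_1(\cdot/\varepsilon) + a_2\varphi_2(\cdot/\varepsilon) - R_\varepsilon$ via Proposition \ref{cru}, and Taylor expanding $(PU_{\mu,\xi})^{p+1}=(U_{\mu,\xi}-\upsilon)^{p+1}$,
\[ I(PU_{\mu,\xi}) = \Bigl(\tfrac{1}{2}-\tfrac{1}{p+1}\Bigr)\int_{\Omega_\varepsilon}U_{\mu,\xi}^{p+1} + \tfrac{1}{2}\int_{\Omega_\varepsilon}U_{\mu,\xi}^p\,\upsilon + O\!\left(\int_{\Omega_\varepsilon}U_{\mu,\xi}^{p-1}\upsilon^2\right). \]
The first integral equals $a_N$ modulo negligible cutoff errors from $\mathbb{R}^N\setminus\Omega_\varepsilon$. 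The leading $H$-contribution to the second integral is computed via $x=\xi+\mu z$, under which $U_{\mu,\xi}^p(x)\,dx = \mu^{(N-4)/2}U^p(z)\,dz$ and $H(\xi,\xi)\to H(0,0)$; this produces the $c_N H(0,0)d^{N-4}$-term. The $a_1\varphi_1$ and $a_2\varphi_2$ pieces are evaluated with the convolution identities
\[ \int_{\mathbb{R}^N}\frac{U^p(z)}{|\tau-z|^{N-4}}\,dz = 2(N-2)(N-4)\,\mathrm{meas}(\mathbb{S}^{N-1})\,U(\tau), \]
\[ \int_{\mathbb{R}^N}\frac{U^p(z)}{|\tau-z|^{N-2}}\,dz = -(N-2)\,\mathrm{meas}(\mathbb{S}^{N-1})\,\Delta U(\tau), \]
which follow from the fact that $U$ and $-\Delta U$ are respectively the biharmonic and the Newton potentials of $U^p$ on $\mathbb{R}^N$. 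Combined with the explicit values of $a_1,a_2$ from \eqref{defa1}--\eqref{defa2}, these produce the $-b_N\Delta U(\tau)U(\tau)\,d^{-(N-2)}$-term. The choice of $\sigma$ in \eqref{sigma} is precisely the one that brings the $H$-piece ($\sim\mu^{N-4}$) and the $\varphi$-piece ($\sim\varepsilon^{N-2}/\mu^{N-2}$) to the same order $\varepsilon^{(N-2)(N-4)/(2(N-3))}$.

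\textbf{Main obstacle and $C^1$-uniformity.} The delicate step is the bookkeeping of the remainders. I would verify that the cutoff error $\int_{\mathbb{R}^N\setminus\Omega_\varepsilon}U_{\mu,\xi}^{p+1}$, the contribution of $R_\varepsilon$ to $\int U_{\mu,\xi}^p\upsilon$ (handled via the sharp bound \eqref{estimateR} and the convolution formulas above), and the quadratic term $\int U_{\mu,\xi}^{p-1}\upsilon^2$ (whose dominant part comes from the $\varphi_2$-piece of $\upsilon$ near $\partial B_\varepsilon$) are all of order at most $(\varepsilon/\mu)^N$ or $\mu^N$, hence $o(\varepsilon^{(N-2)(N-4)/(2(N-3))})$ once $\sigma$ is chosen as in \eqref{sigma}. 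Each such error carries at least one extra power of $\varepsilon^{(N-4)/(2(N-3))}$, which provides the required gap. For the $C^1$-uniformity one differentiates the whole expansion in $(d,\tau)$; the only additional input is a $C^1$-analogue of \eqref{estimateR}--\eqref{estimatedeltaR}, obtainable by the same comparison argument of Proposition \ref{cru} applied to derivatives. Combined with the $C^1$-bound on $\tilde\phi$ from Proposition \ref{p3}, this upgrades the pointwise expansion to $C^1$-convergence on compacta of $\mathbb{R}^+\times\mathbb{R}^N$.
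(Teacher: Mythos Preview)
Your proposal is correct and follows essentially the same route as the paper. For (i) you invoke the standard Lyapunov--Schmidt argument from \cite{DFM,MP}, which is exactly what the paper does; for (ii) you reduce to expanding $I_\varepsilon(V)=I(PU_{\mu,\xi})$, use integration by parts and the decomposition of $PU_{\mu,\xi}-U_{\mu,\xi}$ from Proposition~\ref{cru}, and evaluate the three pieces ($H$, $a_1\varphi_1$, $a_2\varphi_2$) via the biharmonic and Newtonian convolution identities---precisely the computations \eqref{i1}--\eqref{i8} in the paper. One small imprecision: the $R_\varepsilon$-contribution and the quadratic remainder are of order $(\varepsilon/\mu)^{N-1}$ and $(\varepsilon/\mu)^{N-2}\varepsilon$, not quite $(\varepsilon/\mu)^N$ as you state, but they are still $o((\varepsilon/\mu)^{N-2})$, which is all that is needed.
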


\begin{proof}
We argue exactly as in Section 4 of \cite{MP}. We only need to compute the leading term in  the expansion of $J_{\varepsilon }(d,\xi)$, which  is nothing but the energy of the bubble, namely  $I_\varepsilon(V).  $
So we have to compute:

\begin{equation}\label{i1}
\begin{array}{llll}
I_\varepsilon(V)&\!\!\!= &\!\!\! \displaystyle\frac{1}{2} \int_{\tilde\Omega  _\varepsilon} |\Delta V|^{2}-\frac{1}{p+1} \int_{\tilde\Omega  _\varepsilon} V ^{p+1}\medskip\\
&\!\!\!=&\!\!\! \displaystyle\frac{1}{2} \int_{\Omega  _\varepsilon} |\Delta PU_{\mu,\xi}|^{2}-\frac{1}{p+1} \int_{\Omega  _\varepsilon} (PU_{\mu,\xi})^{p+1}\medskip\\
&\!\!\!=&\!\!\!\displaystyle\frac{1}{2} \int_{\Omega _\varepsilon}  U_{\mu,\xi}^{p}PU_{\mu,\xi}-\frac{1}{p+1} \int_{\Omega  _\varepsilon} (PU_{\mu,\xi})^{p+1}\medskip \\
&\!\!\!=&\!\!\!\displaystyle \frac{2}{N}\int_{\Omega  _{\varepsilon}} U_{\mu,\xi}^{p+1}
-\frac{1}{2} \int_{\Omega _{\varepsilon}} U_{\mu,\xi}^{p}(PU_{\mu,\xi}-U_{\mu,\xi})\medskip\\
&\!\!\! &\!\!\! \displaystyle-{\frac{1}{2}p}\int_{\Omega _{\varepsilon}} (t PU_{\mu,\xi}+(1-t)U_{\mu,\xi})^{p-1}(PU_{\mu,\xi}-U_{\mu,\xi})^2.
\end{array}
\end{equation}
 The first term in the R.H.S. of \eqref{i1} is estimated as follows.
\begin{equation}\label{i2}
\begin{array}{llll}
\displaystyle \int_{\Omega\setminus \overline{B}_{\varepsilon}} U_{\mu,\xi}^{p+1}
&\!\!\!=&\!\!\!\displaystyle\alpha_N^{\frac{2N}{N-4}}\int_{\Omega \setminus \overline{B}_\varepsilon} \frac{\mu^{N}}{(\mu^2+|x-\xi|^2)^N}dx \medskip\\
&\!\!\!=&\!\!\!\displaystyle\alpha_N^{\frac{2N}{N-4}}\int_{\mu^{-1}(\Omega \setminus \overline{B}_\varepsilon)} \frac{1}{(1+|y-\tau|^2)^N}dy \medskip\\
&\!\!\!=&\!\!\!\displaystyle\alpha_N^{\frac{2N}{N-4}}\int_{\mathbb{R}^N} \frac{1}{(1+|y|^2)^N}dy+O\left(\left(\frac{\varepsilon}{\mu}\right) ^{N}+\mu^N\right).
\end{array}
\end{equation}
The second term in the R.H.S. of \eqref{i1} is estimated as follows.
By Proposition \ref{cru} we get
\begin{equation}\label{i3}
\begin{array}{llll}
&\!\!\! &\!\!\!\displaystyle \int_{\Omega\setminus \overline{B}_{\varepsilon}} U_{\mu,\xi}^{p}(PU_{\mu,\xi}-U_{\mu,\xi})
=\displaystyle\int_{\Omega\setminus \overline{B}_{\varepsilon}} U_{\mu,\xi}^{p}R_{\varepsilon}\medskip\\
&\!\!\!&\!\!\! \displaystyle -
\int_{{\Omega \setminus \overline{B}_\varepsilon}} U_{\mu,\xi}^{p}\left(\alpha_N\mu^{\frac{N-4}{2}}H(x,\xi)
+a_1\varphi_1\left(\frac{x}{\varepsilon}\right)+a_2\varphi_2\left(\frac{x}{\varepsilon}\right)   \right) dx,
\end{array}
\end{equation}
where $\varphi_1$ and $\varphi_2$ are the functions defined in (\ref{defvarphi}), and
$a_1$ and $a_2$ are the constants defined in (\ref{defa1}) and \eqref{defa2}. We estimate each summand in the right hand side of \eqref{i3}. We scale $x-\xi=\mu y$ and we get
\begin{equation}\label{i4}
\begin{array}{llll}
&\!\!\! &\!\!\!\displaystyle \int_{{\Omega \setminus \overline{B}_\varepsilon}} U_{\mu,\xi}^{p}\alpha_N\mu^{\frac{N-4}{2}}H(x,\xi)dx\medskip\\
&\!\!\!=&\!\!\!\displaystyle  \alpha_N^{\frac{2N}{N-4}} \int_{\mu^{-1}(\Omega \setminus (\overline{B}_\varepsilon-\xi))}\mu^{N-4}H(\mu y+\xi,\xi)\frac{1}{(1+|y|^2)^{\frac{N+4}{2}}}dy\medskip\\
&\!\!\!=&\!\!\!\alpha_N^{\frac{2N}{N-4}}\displaystyle \mu^{N-4}H(0,0) \left( \int_{\mathbb{R}^N}\frac{1}{(1+|y|^2)^{\frac{N+4}{2}}}dy +o(1)\right),
\end{array}
\end{equation}

\begin{align}\label{i5}
 & \int_{{\Omega \setminus \overline{B}_\varepsilon}} U_{\mu,\xi}^{p}a_1\varphi_1\left(\frac{x}{\varepsilon}\right) dx\nonumber\\
& = - \frac{\Delta U(\tau)}{2(N-4)}\frac{\varepsilon^{2} }{\mu^{\frac{N}2}}\mu^{N-4\over2}\displaystyle  \int_{\mu^{-1}(\Omega \setminus (\overline{B}_\varepsilon-\xi))}\varphi_1\left(\frac{\mu}{\varepsilon}(y+\tau)\right)U^p(y)dy\nonumber\\
& = \left(\frac{\varepsilon}{\mu}\right)^{N-2}\left(- \frac{\Delta U(\tau)}{2(N-4)}\int_{\mathbb{R}^N}\frac{1}{|y+\tau|^{N-4}}U^p(y)dy+o(1)\right) \nonumber\\
& = \left(\frac{\varepsilon}{\mu}\right)^{N-2}\left(-\frac{N-2}{2 } \mathrm{meas}  (\mathbb S^{N-1}) \Delta U(\tau)U(\tau)+o(1)\right),
\end{align}
because the function $U$ solves \eqref{Pinf} and the Green's function of $\Delta^2$ in $\mathcal D^{2,2}(\mathbb R^N)$ is $1\over|x-y|^{N-4}$ with the normalization constant  given by $(N-2)(N-4)\mathrm{meas}(\mathbb S^{N-1}) $ (see also \eqref{green}),

\begin{align}\label{i6}
 &\int_{{\Omega \setminus \overline{B}_\varepsilon}} U_{\mu,\xi}^{p}a_{2}\varphi_2\left(\frac{x}{\varepsilon}\right) dx\nonumber\\
& = U(\tau)\int_{\mu^{-1}(\Omega \setminus (\overline{B}_\varepsilon-\xi))}\varphi_2\left(\frac{\mu}{\varepsilon}(y+\tau)\right)U^p(y)dy\nonumber\\
& +\frac{\Delta U(\tau)}{2(N-4)} \frac{\varepsilon^{2} }{\mu^{\frac{N}2}  } \mu^{N-4\over2}   \int_{\mu^{-1}(\Omega \setminus (\overline{B}_\varepsilon-\xi))}\varphi_2\left(\frac{\mu}{\varepsilon}(y+\tau)\right)U^p(y)dy\nonumber\\
& =  \left(\frac{\varepsilon}{\mu}\right)^{N-2}\left(U(\tau)\int_{\mathbb{R}^N}\frac{1}{|y+\tau|^{N-2}}
U^p(y)dy+o(1)\right)\nonumber\\
& + \left(\frac{\varepsilon}{\mu}\right)^{N}\left(\frac{\Delta U(\tau)}{2(N-4)}\int_{\mathbb{R}^N}\frac{1}{|y+\tau|^{N-2}}U^p(y)dy+o(1)\right),
\nonumber\\
& =  \left(\frac{\varepsilon}{\mu}\right)^{N-2}\left(U(\tau)\int_{\mathbb{R}^N}\frac{1}{|y+\tau|^{N-2}}
U^p(y)dy+o(1)\right)\nonumber\\
& =  \left(\frac{\varepsilon}{\mu}\right)^{N-2}\left(-(N-2)\mathrm{meas}(\mathbb S^{N-1}) U(\tau)\Delta U(\tau)+o(1)\right)
,\end{align}
because the function
$W=\Delta U$ solves the problem $\Delta W=U^p$ in $\mathbb R^N$ and the Green's function of $-\Delta$ in $\mathcal D^{2,2}(\mathbb R^N)$ is $1\over|x-y|^{N-2}$ with the normalization constant  given by   $(N-2)\mathrm{meas}(\mathbb S^{N-1}).$

Moreover, by \eqref{estimateR} we deduce
 \begin{equation}\label{i7}
\begin{array}{llll}
&\!\!\! &\!\!\!\displaystyle\int_{{\Omega \setminus \overline{B}_\varepsilon}} U_{\mu,\xi}^{p}|R_\varepsilon(x)| dx\medskip\\
&\!\!\!=&\!\!\!
 O\left(\displaystyle\int_{{\Omega \setminus \overline{B}_\varepsilon}} {\mu^{N+4\over2}\over(\mu^2+|x-\xi|^2)^{N+4\over2}}\left( {\varepsilon^{N-1}\over \mu^{N+2\over2}}{1\over |x|^{N-4}}+ {\varepsilon^{N-1}\over \mu^{N-2\over2}}{1\over |x|^{N-2}}\right)
 dx\right)
 \medskip\\
&\!\!\!= &\!\!\!
 O\left(\displaystyle{\varepsilon^{N-1}\over \mu^{N-2}} \int_{\mathbb R^N} {1\over(1+|y|^2)^{N+4\over2}} {1\over |y|^{N-4}}dy+\displaystyle{\varepsilon^{N-1}\over \mu^{N-1}} \int_{\mathbb R^N} {1\over(1+|y|^2)^{N+4\over2}} {1\over |y|^{N-2}}dy
\right)\medskip\\
&\!\!\!= &\!\!\!
o\left(\displaystyle{\varepsilon^{N-2}\over \mu^{N-2}} \right)
 \end{array}
\end{equation}

The last term in the R.H.S. of \eqref{i1} can be estimated as \eqref{i7}
and so
 \begin{equation}\label{i8}
{\frac{1}{2}p}\int_{\Omega _{\varepsilon}} (t PU_{\mu,\xi}+(1-t)U_{\mu,\xi})^{p-1}(PU_{\mu,\xi}-U_{\mu,\xi})^2=o\left({\varepsilon^{N-2}\over \mu^{N-2}} \right).
\end{equation}

We  collect all the estimates \eqref{i1}--\eqref{i8} and the claim follows, provided $\sigma$ is chosen so that
\begin{equation}\label{sigma1}
\mu^{N-4}\sim\left({\varepsilon\over\mu}\right)^{N-2}\quad\Rightarrow\quad \mu^{2(N-3)}\sim\varepsilon^{N-2}\quad\Rightarrow\quad \sigma={N-2\over 2(N-3)}.\end{equation}
 \end{proof}

\begin{proof}[Proof of the Theorem \ref{maintheorem}]
We know that $PU_{\mu,\xi}+\phi$ is a solution to problem \eqref{P2} if and only if the function $V+\tilde\phi$ is a solution of \eqref{Pve}.
From (i) of Proposition \ref{rido} we have that the function $V+\tilde\phi$ is a solution of \eqref{Pve} or \eqref{Pve2} if and only if $(d,\tau)$ is a critical point of the reduced energy $J_\varepsilon$ defined in \eqref{je}. Then, from (ii) of Proposition \ref{rido}, we only need to find a critical point of the function $\Psi$ defined in (\ref{PSI}), which is stable under $C^1$ perturbations. Indeed, it is easy to check that   the function $\Psi$ has a nondegenerate critical point
$\left(-{(N-2) b_N \Delta U(0)U(0)\over(N-4) c_NH(0,0)},0\right)$ of ``saddle'' type
, which is stable with respect to $C^1$ perturbations. That proves our claim.
 \end{proof}


\begin{thebibliography}{99}
 \bibitem{Ba-Co}
A. Bahri and J.M. Coron, {\it On a nonlinear elliptic equation involving the critical Sobolev exponent: the effect of the topology of the domain}, Comm. Pure Appl. Math. {\bf 41} (1988), 255--294.


\bibitem{BCY} T. Branson, S.-Y.A. Chang, P. Yang, {\it Estimates and extremal problems for the log-determinant
on 4-manifolds}, Commun. Math. Phys. {\bf 149} (1992) 241--262.

\bibitem{C} S.-Y.A. Chang, On Paneitz operator–a fourth order differential operator in conformal geometry,
Harmonic Analysis and Partial Differential Equations; essays in honour of A.P. Calderon;
Chicago Lectures in Mathematics, 1999, Chap. 8, pp. 127--150.

\bibitem{CGY} S.-Y.A. Chang, M. Gursky, P. Yang, {\it On the regularity of a fourth order PDE with critical
exponent,}  Am. J. Math. {\bf  121 }  (1999) 215--257.

\bibitem{CY}  S.-Y.A. Chang, P. Yang, {\it Extremal metrics of zeta functional determinants on 4-manifolds, } Ann.
Math. {\bf  142} (1995) 171--212.

  \bibitem {co} J.M. Coron,  {\it Topologie et cas limite des injections de
Sobolev}, C. R. Acad. Sci. Paris Sr. I Math. {\bf 299} (1984), 209--212.


\bibitem{EbobisseAhmedou}
{F. Ebobisse,M. Ahmedou}, {\it On a nonlinear fourth order elliptic equation involving the critical Sobolev exponent}, Nonlinear Anal. {\bf 52} (2003), 1535--1552.

 \bibitem{DFM}  M. Del Pino, P. Felmer, M. Musso,
{\it Two-bubble solutions in the super-critical Bahri-Coron's problem},
Calc. Var. Partial Differential Equations {\bf 16} (2003),  113--145.


\bibitem{GazzolaGrunauSquassina}
F. Gazzola, H.-C. Grunau, M. Squassina, {\it Existence and nonexistence results for critical growth biharmonic elliptic equations}, Calc. Var. Partial Differential Equations {\bf 18} (2003), 117--143.


\bibitem{GMP}
Y. Ge, M. Musso, A. Pistoia,
  {\it Sign changing tower of bubbles for an elliptic problem at the critical exponent in pierced non-symmetric domains}
Comm. Partial Differential Equations  {\bf  35} (2010), no. 8, 1419--1457.



\bibitem{Chang-ShouLin}
{C. S. Lin}, {\it A classification of solutions of a conformally invariant fourth order equation in $\mathbb{R}^n$}, Comment. Math. Helv. {\bf 73} (1998), 206--231.

\bibitem{L}
R. Lewandowski, {\it Little holes and convergence of solutions of $-\Delta u = u^{(N+2)/(N-2)}$}, Nonlinear Anal.  {\bf 14}  (1990), 873--888.

\bibitem {LYY} G. Li, S. Yan, J. Yang, {\it An elliptic problem with critical growth in domains with shrinking
holes,} J. Differential Equations {\bf 198 } (2) (2004) 275–300.

\bibitem{LuWei}
{G. Lu, J. Wei}, {\it On a Sobolev inequality with remainder terms}, Proc. Amer. Math. Soc. {\bf 128}Ê(2000), 75--84.



\bibitem{MP}  M. Musso, A. Pistoia,  {\it Persistence of Coron's solution in nearly critical problems}
 Ann. Scuola Norm. Sup. Pisa Cl. Sci      {\bf 5}  (2007),   331--357.

\bibitem{MP1}  M. Musso, A. Pistoia,
{\it Sign changing solutions to a nonlinear elliptic problem
involving the critical Sobolev exponent in pierced domains}
 J. Math. Pures Appl.  {\bf 86}  (2006),  no. 6, 510--528.


\bibitem{MP2}  M. Musso, A. Pistoia,
{\it Sign changing solutions to a Bahri-Coron's problem in pierced domains.}
   Discrete Contin. Dyn. Syst.    {\bf 21}  (2008),  no. 1, 295--306.



\bibitem{Passaseo1}
D. Passaseo, {\it New nonexistence results for elliptic equations with supercritical nonlinearity}, Differential Integral Equations {\bf  8} (1995), 577--586.
\bibitem{Passaseo2}
D. Passaseo, {\it Nontrivial solutions of elliptic equations with supercritical exponent in contractible domains}, Duke Math. J. {\bf 92} (1998), 429--457.

\bibitem{Po}
S. I. Poho\v zaev, {\it On the eigenfunctions of the equation $\Delta u+\lambda f(u) = 0$}, (Russian) Dokl. Akad. Nauk SSSR {\bf 165} (1965), 36--39.


\bibitem {R} O. Rey, {\it Sur un probl\'eme variationnel non compact: l'effet de petits trous dans le domaine, } C.
R. Acad. Sci. Paris {\bf 308 } (1989) 349--352.




\bibitem{VanderVorst}
R.C.A.M. Van der Vorst, {\it Best constant for the embedding of the space $H^2 \cap H_0^1(\Omega)$ into $L^{2N/(N-4)}(\Omega)$},  Differential Integral Equations {\bf 6} (1993), 259--276.
\end{thebibliography}
\end{document}